\newcommand{\bbH}{\mathbb{H}}
\newcommand{\bbF}{\mathbb{F}}
\newcommand{\mc}[1]{\mathcal{#1}}
\newcommand{\bbR}{\mathbb{R}}
\newcommand{\bbZ}{\mathbb{Z}}
\newcommand{\bbN}{\mathbb{N}}
\newcommand{\N}{\operatorname{N}}
\newtheorem{theorem}{Theorem}
\newtheorem{cor}[theorem]{Corollary}
\newtheorem{prop}[theorem]{Proposition}
\newtheorem*{conj*}{Conjecture}
\theoremstyle{remark}
\newtheorem{rmk}[theorem]{Remark}
\def\ps@pprintTitle{%
  \let\@oddhead\@empty
  \let\@evenhead\@empty
  \let\@oddfoot\@empty
  \let\@evenfoot\@oddfoot
}
\begin{document}

\begin{frontmatter}

  \title{On the Cycle Structure of the Metacommutation Map}

  \author{António Leite$^*$, António Machiavelo} 

  \cortext[]{Corresponding author: up201805230@up.pt}

  \address{Centro de Matemática da Universidade do Porto\\ 4169-007
    Porto, Portugal}

  \begin{abstract}
    Cohn and Kumar showed that the permutation on the set of the
    classes of left associated Hurwitz primes above an odd prime $p$
    induced through metacommutation by a Hurwitz prime $\xi$ of norm
    $q$ has either $0$, $1$ or $2$ fixed points, and that the
    permutation $\tau_{\xi,p}$ induced on the non-fixed points splits
    into cycles of the same length. Here we show how to find the
    length of those cycles, in terms of $p$ and $\xi$, using
    cyclotomic polynomials over $\mathbb{F}_p$. We then show that,
    given an odd prime $p$, there is always a prime quaternion $\xi$
    such that the permutation $\tau_{\xi,p}$ has only one non-trivial
    cycle of length $p$. Finally, we give conditions for a prime $\pi$
    of norm $p$ to be a fixed point of the aforementioned
    metacommutation map.
  \end{abstract}
\end{frontmatter}
\section{Introduction}

Consider the Hamilton quaternions,
$\bbH=\{a+bi+cj+dk\ |\ a,b,c,d \in \bbR\}$, the multiplication in
$\bbH$ being determined by $i^2=j^2=k^2=ijk=-1$.
In this paper, we study its subring of the so called Hurwitz
quaternions,
\begin{equation*}
  \mc H=\left\{\frac{1}{2}(a+bi+cj+dk)\in \bbH\ |\ a\equiv b \equiv c
    \equiv d \pmod{2} \right\}. 
\end{equation*}

Recall that, for any quaternion $\alpha=a+bi+cj+dk \in \bbH$, the
quaternion $\bar{\alpha}=a-bi-cj-dk$ is its conjugate. We denote its
real part by $\Re(\alpha)$ and its vector part by $\mc V(\alpha)$. Its
norm is $\operatorname{N}(\alpha)=\alpha\bar{\alpha}=a^2+b^2+c^2+d^2$,
and the trace is
$\operatorname{Tr}(\alpha)=\alpha+\bar{\alpha}=2 \Re(\alpha)$.  Since
the ring of Hurwitz quaternions is left and right Euclidean for the
norm, it is also a left and right PID. An element $\pi \in \mc H$ is
called prime if it is irreducible. It turns out that a Hurwitz integer
$\pi$ is prime in $\mc H$ if and only if $\operatorname{N}(\pi)=p$
with $p$ prime. In this case, we say that $\pi$ is a prime quaternion
\emph{above} the rational prime $p$.

As a consequence of the existence of the Euclidean algorithms, it can
be shown \cite[Lecture 10]{hurwitz2023} that any non-zero
$\gamma\in\mc H$ with norm
$\operatorname{N}(\gamma)=p_1p_2\cdots p_n$, where $p_i$ are prime,
can be factored as
$$\gamma= \pi_1 \pi_2\cdots \pi_n,\ \quad \text{ where }
\operatorname{N}(\pi_i)=p_i.$$
As in \cite{ConwaySmith}, we say that this factorization of $\gamma$
is modeled on the factorization $p_1 p_2\cdots p_n$ of
$\operatorname{N}(\gamma)$. When $\gamma$ is not a multiple of an
integer greater than $1$, this factorization is unique up to unit
migration, that is, every factorization of $\gamma$ modeled on the
factorization $\operatorname{N}(\gamma) = p_1 p_2\cdots p_n$ (where
order matters) is of the form
$$\gamma= (\pi_1u_1)(u_1^{-1}\pi_2u_2)\dots (u_{n-1}^{-1}\pi_n),\
\enspace \text{ where } \operatorname{N}(\pi_i)=p_i \text{ and } u_i
\text{ are units.} $$

Consider, in particular, the quaternion $\gamma=\pi\xi$, modeled on
$\operatorname{N}(\gamma)=pq$, where $p$ and $q$ are distinct rational
primes. Then $\gamma$ has a unique factorization $\xi'\pi'$ modeled on
$q p$, up to unit migration. This process of swapping adjacent primes
was called \textit{metacommutation} by Conway and Smith, in
\cite{ConwaySmith}. In fact, they proved that the prime factorization
of an arbitrary Hurwitz integer is unique up to metacommutation, unit
migration and recombination, the replacement in a factorization of
$\pi\bar{\pi}$ with $\pi_1\bar{\pi}_1$, where
$\operatorname{N}(\pi)=\operatorname{N}(\pi_1)$.

For every odd rational prime $p$, there are, up to left associates
(i.e.~up to left multiplication by a unit), precisely $p+1$ Hurwitz
primes above $p$, as first shown by Hurwitz
\cite[p.~94]{hurwitz2023}. Let $\Pi_p$ denote the set of these $p+1$
classes of left associated quaternion primes above $p$. By
metacommutation, a prime $\xi \in \mc H$ induces a permutation
$\tau_{\xi,p}$ on $\Pi_p$ by sending the class of $\pi$ to the class
of the prime $\pi'$ which satisfies $\pi\xi=\xi'\pi'$.  One calls this
map the metacommutation map induced on $\Pi_p$ by $\xi$.

In \cite{CohnKumar}, Cohn and Kumar prove the following theorem about
the metacommutation map that motivates the results in this paper.

\begin{theorem}[Cohn and Kumar, Theorem 1.1]
  \label{thm1}
  Let $p$ and $q$ be distinct rational primes, let $\xi$ be a Hurwitz
  prime of norm $q$. The metacommutation map $\tau_{\xi,p}$ is a
  permutation, whose sign is exactly $(\frac{q}{p})$.

  The map $\tau_{\xi,p}$ is the identity if either $p=2$ or if $\xi$
  is congruent to a rational integer modulo $p$. Otherwise, it has
  $1+\left(\frac{\operatorname{Tr}(\xi)^2-4q}{p}\right)$ fixed points.
\end{theorem}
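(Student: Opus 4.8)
The plan is to translate the whole problem into linear algebra over $\bbF_p$ by reducing the Hurwitz order modulo $p$. For odd $p$ the Hamilton quaternion algebra splits at $p$, so there is an $\bbF_p$-algebra isomorphism $\Phi\colon \mc H/p\mc H \xrightarrow{\sim} M_2(\bbF_p)$ under which the reduced trace $\operatorname{Tr}$ and the reduced norm $\N$ become the matrix trace and the determinant. A prime $\pi$ above $p$ has $\N(\pi)=p\equiv 0$, so $M_\pi:=\Phi(\pi \bmod p)$ is a nonzero matrix of determinant $0$, hence of rank $1$. Left multiplication $\pi\mapsto u\pi$ by a unit sends $M_\pi$ to $\Phi(u)M_\pi$ with $\Phi(u)$ invertible, and this preserves $\ker M_\pi$. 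First I would prove that $[\pi]\mapsto \ker M_\pi$ is a well-defined \emph{bijection} $\Pi_p \to \mathbb{P}^1(\bbF_p)$, by identifying left-associate classes with principal left ideals $\mc H\pi$ of reduced norm $p$, and these in turn with the lines $\ker M_\pi$; since the two sides each have $p+1$ elements, the content is that distinct lines give distinct ideals.

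Next I would read off metacommutation in these coordinates. Writing $S=\Phi(\xi \bmod p)\in \mathrm{GL}_2(\bbF_p)$, which is invertible because $\N(\xi)=q$ is prime to $p$, and reducing the defining relation $\pi\xi=\xi'\pi'$ modulo $p$ gives $M_\pi S = S' M_{\pi'}$ with $S'=\Phi(\xi'\bmod p)$ invertible. Hence $M_{\pi'}=S'^{-1}M_\pi S$ and $\ker M_{\pi'}=\ker(M_\pi S)=S^{-1}\ker M_\pi$. Thus, under the bijection of the first paragraph, $\tau_{\xi,p}$ is exactly the action on $\mathbb{P}^1(\bbF_p)$ of the class $[S^{-1}]\in \mathrm{PGL}_2(\bbF_p)$. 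Everything now reduces to the permutation of the projective line induced by a single matrix, and since inversion and conjugation affect neither, the sign and the fixed points of $[S^{-1}]$ coincide with those of $[S]$.

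For the sign, I would use that $A\mapsto \operatorname{sgn}(A\text{ on }\mathbb{P}^1)$ is a homomorphism $\mathrm{GL}_2(\bbF_p)\to\{\pm1\}$. A transvection such as $\left(\begin{smallmatrix}1&1\\0&1\end{smallmatrix}\right)$ acts as $x\mapsto x+1$ on the $p$ affine points and fixes $\infty$, a single $p$-cycle of even sign; since $\mathrm{SL}_2(\bbF_p)$ is generated by transvections, the sign is trivial on $\mathrm{SL}_2(\bbF_p)$ and therefore factors as $A\mapsto \psi(\det A)$ for some quadratic character $\psi$ of $\bbF_p^\times$. Evaluating on $\operatorname{diag}(g,1)$ with $g$ a generator, which acts as a single $(p-1)$-cycle of odd sign while fixing $0$ and $\infty$, forces $\psi(g)=-1$; hence $\psi=\left(\frac{\cdot}{p}\right)$ and the sign of $\tau_{\xi,p}$ equals $\left(\frac{\det S}{p}\right)=\left(\frac{q}{p}\right)$.

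Finally, the fixed points of $[S]$ on $\mathbb{P}^1(\bbF_p)$ are precisely its $\bbF_p$-rational eigenlines, governed by the characteristic polynomial $x^2-\operatorname{Tr}(\xi)x+q$ over $\bbF_p$, whose discriminant is $\operatorname{Tr}(\xi)^2-4q$. If $S$ is scalar, equivalently $\xi$ is congruent to a rational integer modulo $p$, then $[S]$ is trivial and $\tau_{\xi,p}=\mathrm{id}$; the case $p=2$ is exceptional because the algebra ramifies at $2$, so $\Phi$ is unavailable and it is treated directly. Otherwise $S$ is non-scalar, and the three cases in which the discriminant is a nonzero square, is zero, or is a nonsquare give $2$, $1$, or $0$ distinct $\bbF_p$-eigenlines, i.e.\ $1+\left(\frac{\operatorname{Tr}(\xi)^2-4q}{p}\right)$ fixed points. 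I expect the main obstacle to be the foundational dictionary of the first paragraph — securing the splitting $\Phi$ with the correct compatibility of reduced trace and norm, and proving that $[\pi]\mapsto\ker M_\pi$ is a genuine bijection onto $\mathbb{P}^1(\bbF_p)$ intertwining metacommutation with the projective action; once this is in place, the sign and fixed-point counts are routine linear algebra.
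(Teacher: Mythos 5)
Your proposal is correct, but it follows a genuinely different route from the proof the paper leans on (Cohn and Kumar's, as sketched in the introduction). The paper never chooses a splitting of $\mc H_p$: it works intrinsically, identifying $\Pi_p$ with the conic $C_p\subset\mathbb{P}^2(\bbF_p)$ via the trace-zero elements $t_\pi$, realizing $\tau_{\xi,p}$ as the conjugation action $t_\pi\mapsto(\xi_p)^{-1}t_\pi\xi_p$, i.e.\ as a $3\times 3$ rotation $\phi_{\xi,p}$ with characteristic polynomial $(x-1)f_{\xi,p}(x)$, counting fixed points as eigenvectors of $\phi_{\xi,p}$ that lie \emph{on the conic}, and extracting the sign from the structure of the cyclic group $SO(g_t)$ acting simply transitively on an affine conic. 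You instead split $\mc H_p\cong M_2(\bbF_p)$ (legitimate for odd $p$: $\mc H_p$ is a quaternion algebra over a finite field, hence split by Wedderburn's little theorem), identify $\Pi_p$ with $\mathbb{P}^1(\bbF_p)$ via kernels of the rank-one matrices $M_\pi$, and turn $\tau_{\xi,p}$ into the M\"obius action of $[S^{-1}]\in \mathrm{PGL}_2(\bbF_p)$; the two pictures correspond under the exceptional isomorphism $\mathrm{PGL}_2\cong SO_3$, your $\mathbb{P}^1$ being the paper's conic via the Veronese embedding. The trade-offs are real. Your sign argument (sign is a homomorphism $\mathrm{GL}_2(\bbF_p)\to\{\pm1\}$, trivial on $\mathrm{SL}_2$ since transvections act as $p$-cycles, hence a character of the determinant, pinned down on $\operatorname{diag}(g,1)$) is more elementary than the $SO(g_t)$ machinery. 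Your fixed-point count is also cleaner: every $\bbF_p$-eigenline of $S$ is a fixed point, so the discriminant $\operatorname{Tr}(\xi)^2-4q$ of the reduced characteristic polynomial appears directly, whereas in the conic picture the discriminant of $f_{\xi,p}$ is $\operatorname{Tr}(\xi)^2(\operatorname{Tr}(\xi)^2-4q)/q^2$ and one must additionally decide which eigenvectors of $\phi_{\xi,p}$ are isotropic (e.g.\ when $p\mid\operatorname{Tr}(\xi)$, $f_{\xi,p}=(x+1)^2$ has a double root yet the fixed-point count is $1+\left(\frac{-q}{p}\right)$, governed by the intersection of the $(-1)$-eigenplane with $C_p$). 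What the paper's route buys in exchange is precisely what the rest of the paper needs: the cyclic, simply transitive $SO(g_t)$-action is what proves that all non-trivial cycles have equal length and is reused in Theorem~\ref{mainthm}; your sketch does not recover that (nor does Theorem~\ref{thm1} require it). Two loose ends in your write-up, both minor: for $p=2$ you should say why the map is the identity (up to left associates there is a single Hurwitz prime above $2$, so there is nothing to permute), and in the bijection $\Pi_p\to\mathbb{P}^1(\bbF_p)$ the point to verify is that distinct associate classes give distinct kernels --- equivalently, that a nontrivial left ideal of $M_2(\bbF_p)$ is exactly the annihilator of a line --- after which Hurwitz's count of $p+1$ classes gives surjectivity.
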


Let $\mc H_p=\mc H/p\mc H$, a quaternion algebra over $\bbF_p$, which
has dimension $4$ (see \cite[Math Theorem 5.4.4]{Voight}). For any
quaternion $\xi$, we denote its reduction modulo $p$ by $\xi_p$. As it
is well known, the ideals of $\mc H_p$ are the image of the ideals of
$\mc H$ that contain $p$ by the natural projection map
$\mc H\to\mc H_p$.  Since $\mc H$ is a PID, the ideals that contain
$p$ are ideals generated by divisors of $p$, and thus the non-trivial
ideals of $\mc H_p$ are the ideals $\mc H_p \pi_p$, where $\pi$ is a
prime quaternion above $p$. Let $\mc I_p$ be the set of these
ideals. It follows that there exists a one-to-one correspondence
between $\Pi_p$ and $\mc I_p$.

In \cite[Lemma 2.1]{CohnKumar} it is remarked that there is in
$\mc H_p\pi_p$, up to scalar multiplication, an unique element of
trace zero, that we will here denote by $t_\pi$. Moreover, since any
element of $\mc H\pi$ has norm divisible by $p$, it follows that one
can naturally associate to $t_\pi = xi+yj+zk$ ($x,y,z\in\bbF_p$) the
point $c_\pi:=[x:y:z]$ in the conic
\begin{equation*}
C_p=\left\{[x: y: z] \in \mathbb{P}^2\left(\mathbb{F}_p\right)
    \mid x^2+y^2+z^2=0\right\}.
\end{equation*}
In fact, the map $\mc I_p\to C_p$ given by
$\mc H_p \pi_p\mapsto c_\pi$ is a bijection, since given any point
$c=[x:y:z]\in C_p$, if we consider the element
$t=x i+y j+ zk\in\mc{H}_p$, then it is easy to see that the ideal
$\mc H_p t$ is non-trivial, hence of the form $\mc H_p \pi_p$, for
some prime $\pi$ above $p$. We, thus, get the bijection
\begin{center}
  \begin{tabular}{ccc}
    $\Pi_p$ & $\longrightarrow$ & $C_p$ \\
    $[\pi]$ & $\mapsto$ & $c_\pi$
  \end{tabular}
\end{center}
of Proposition 2.2 in \cite{CohnKumar}.

When $\pi \xi=\xi'\pi'$, we have, by \cite[Theorem 3.1]{CohnKumar},
that the unique element $t_{\pi'}$, up to scaling, is equal to
$(\xi_p)^{-1}t_\pi\xi_p$. This means that, if $t_\pi=xi+yj+zk$, we
have, up to scaling, that
$t_{\pi'}=(\xi_p)^{-1}t_\pi\xi_p=x'i+y'j+z'k$, with
$(x',y',z')=\phi_{\xi,p}(x,y,z)$, where $\phi_{\xi,p}$ is the linear
endomorphism of $\bbF_p^3$ given by the matrix
$$\frac{1}{q}\left(
  \begin{array}{ccc}
    a^2+b^2-c^2-d^2 & 2 a d+2 b c & -2 a c+2 b d \\
    -2 a d+2 b c & a^2-b^2+c^2-d^2 & 2 a b+2 c d \\
    2 a c+2 b d & -2 a b+2 c d & a^2-b^2-c^2+d^2
  \end{array}\right),
$$
and $\xi_p = a+bi+cj+dk$.  The characteristic polynomial of
$\phi_{\xi,p}$ is $(x-1)f_{\xi,p}(x)$, where
$f_{\xi,p}(x)=x^2+\left(2-\frac{\operatorname{Tr}(\xi)^2}{q}\right)
x+1\in\bbF_p[x]$ (see~\cite[Lemma 3.2]{CohnKumar}).

The proof of Theorem \ref{thm1} relies on the fact that, following the
bijections described above, we can think of $\tau_{\xi,p}$ as the left
action of $\phi_{\xi, p}$ on the points $C_p$. This means that, if we
view $\phi_{\xi, p}$ as a projective transformation, the number of
fixed points of $\tau_{\xi,p}$ is simply the number of fixed points of
$\phi_{\xi,p}$ that are in $C_p$, or equivalently, the number of
eigenvectors of $\phi_{\xi,p}$, viewed as a transformation in
$\bbF_p^3$, that lie on the set
$\{(x,y,z)\in \bbF_p^3 \mid x^2+y^2+z^2=0\}$.  Additionaly, the sign
of the permutation is found using the fact that the group $SO(g_t)$,
where $g_t$ is the diagonal form $x^2-ty^2$ with
$t\in \bbF_p\setminus\{0\}$, is cyclic and its elements are semisimple
(see \cite[Proposition 5.3]{CohnKumar}). In fact, if $\tau_{\xi,p}$ is
neither the identity, nor a cycle of length $p$, it is proved in
\cite[Theorem 6.1]{CohnKumar} that, in order to understand the orbits
of $\phi_{\xi,p}$ on $C_p$, it is enough to understand the action of a
special element $\psi_{\xi,p}\in SO(g_t)$, whose characteristic
polynomial is precisely $f_{\xi,p}$, on the vectors of a fixed norm
$u$. In the general case, this is the permutation induced by
$\psi_{\xi,p}$ on the affine conic $D:=x^2-ty^2=u$, whose points are
precisely those with norm $u$. It happens that, since $SO(g_t)$ is
cyclic and its action on $D$ is simply transitive (which is proved in
\cite[Lemma 5.5]{CohnKumar}), the permutation induced by
$\psi_{\xi,p}$ is a product of
$\frac{\lvert SO(g_t) \rvert}{\operatorname{ord}(\psi_{\xi,p})} $
disjoint $\operatorname{ord}(\psi_{\xi,p})$-cycles and its sign is
precisely $(\frac{q}{p})$. This proves that all the cycles which are
not fixed points have the same length. This result, which is also
proved in Theorem 5.3 of \cite{ForsythEtAll}, implies that the cycle
length of the non-fixed points, which we denote $\ell_{\xi,p}$,
divides $p+1$, $p$ or $p-1$ depending on whether the permutation has
0, 1 or 2 fixed points, respectively.  We call cycles of length
greater than one \emph{non-trivial}. Note that, by Theorem \ref{thm1},
assuming the existence of non-trivial cycles is the same as saying
that $p$ must be odd and $\xi$ must not be congruent to an integer
modulo $p$.

In his PhD thesis, \cite{Nikos}, N. Tsopanidis gives criteria to
decide when the non-trivial cycles are of length either $2$ or $3$. In
this paper, we generalize Tsopanidis' results to all lengths, by
noticing that $\ell_{\xi,p}=t$, with $t\in \mathbb{N}$ with $t>1$, if
and only if the roots of $f_{\xi,p}$ are primitive $t^{\rm\, th}$
roots of unity.

\section{Cycle Structure}

Given a prime quaternion $\xi$, an odd rational prime $p$, and
$t \in\bbN$, with $t>1$, the following theorem provides a necessary
and sufficient condition for the non-trivial cycles of $\tau_{\xi,p}$
to have length $t$.

\begin{theorem}\label{mainthm}
  Let $p$ be an odd prime, $\xi$ a Hurwitz prime not congruent to a
  rational integer modulo $p$ with reduced norm $q$, and let
  $\ell_{\xi,p}$ be the length of the non-trivial cycles of
  $\tau_{\xi,p}$. Then $\ell_{\xi,p}=t$ if and only if
  $\left(f_{\xi,p}(x), \Phi_t(x)\right)\neq 1$, where $f_{\xi,p}$ is the
  polynomial
  $f_{\xi,p}(x)=x^2+\left(2-\frac{\operatorname{Tr}(\xi)^2}{q}\right)x+1$
  and $\Phi_t(x)$ is the $t^{\rm\, th}$ cyclotomic polynomial over
  $\mathbb{F}_p$.
\end{theorem}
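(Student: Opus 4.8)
The plan is to pass from the combinatorial statement $\ell_{\xi,p}=t$ to an arithmetic statement about the multiplicative order of the roots of $f_{\xi,p}$, and then to recast that order condition as the non-vanishing of $\gcd(f_{\xi,p},\Phi_t)$. First I would invoke the reduction recalled in the introduction: when $\tau_{\xi,p}$ is neither the identity (excluded by hypothesis, as $p$ is odd and $\xi$ is not congruent to a rational integer modulo $p$) nor a $p$-cycle, its non-trivial cycles all have length $\operatorname{ord}(\psi_{\xi,p})$, where $\psi_{\xi,p}$ is the distinguished element of the cyclic group $SO(g_t)$ of the introduction (whose parameter is an element of $\bbF_p$, not the integer $t$) having characteristic polynomial $f_{\xi,p}$. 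Since the constant term of $f_{\xi,p}$ is $1$, its roots form an inverse pair $\{\lambda,\lambda^{-1}\}\subset\overline{\bbF_p}^{\times}$, and as every element of $SO(g_t)$ is semisimple, $\psi_{\xi,p}$ is diagonalizable with eigenvalues $\lambda,\lambda^{-1}$. Hence $\operatorname{ord}(\psi_{\xi,p})$ equals the common multiplicative order of $\lambda$ and $\lambda^{-1}$, so that $\ell_{\xi,p}=\operatorname{ord}(\lambda)$ in this regime.

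The first main step is then the equivalence: $\ell_{\xi,p}=t$ if and only if the roots of $f_{\xi,p}$ are primitive $t$-th roots of unity. In the generic (distinct-root) situation this is immediate from $\ell_{\xi,p}=\operatorname{ord}(\lambda)$. I would also dispatch the degenerate case: computing the discriminant $\Delta=\frac{\operatorname{Tr}(\xi)^2}{q^2}(\operatorname{Tr}(\xi)^2-4q)$, a double root occurs exactly when $\operatorname{Tr}(\xi)=0$ or $\operatorname{Tr}(\xi)^2=4q$. The former forces the double root to be $-1$ and $\psi_{\xi,p}=-\mathrm{Id}$, of order $2$, matching that $-1$ is a primitive second root of unity; the latter forces the double root $1$ and is precisely the $p$-cycle case treated below.

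The second main step is purely polynomial. For $p\nmid t$, the mod-$p$ reduction of $\Phi_t$ has, as its roots in $\overline{\bbF_p}$, exactly the primitive $t$-th roots of unity. Using that two polynomials over a field have non-trivial gcd if and only if they share a root in the algebraic closure (the gcd being stable under field extension, since it is computed by the Euclidean algorithm), $\lambda$ is a primitive $t$-th root of unity if and only if $f_{\xi,p}$ and $\Phi_t$ share a root, i.e.\ $(f_{\xi,p},\Phi_t)\neq 1$. Chaining the two steps settles every $t$ with $p\nmid t$.

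The step I expect to be the genuine obstacle is the case $p\mid t$, where $\overline{\bbF_p}^{\times}$ has no element of order $t$ and the primitive-root picture collapses. I would first note that, because $\ell_{\xi,p}$ divides one of $p-1$, $p$, $p+1$, it can be a multiple of $p$ only if it equals $p$; hence the sole relevant value with $p\mid t$ is $t=p$, the $p$-cycle case. There I would use the factorization $\Phi_p(x)\equiv(x-1)^{p-1}\pmod p$, whence $(f_{\xi,p},\Phi_p)\neq 1$ if and only if $f_{\xi,p}(1)=4-\frac{\operatorname{Tr}(\xi)^2}{q}=0$, i.e.\ $\operatorname{Tr}(\xi)^2=4q$; by Theorem~\ref{thm1} this is exactly the condition for $\tau_{\xi,p}$ to have a single fixed point, equivalently to be a $p$-cycle, so $\ell_{\xi,p}=p=t$. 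Making this cyclotomic bookkeeping agree with the cycle length, where the naive order-of-$\lambda$ argument would wrongly return $1$, is the delicate point that the clean $p\nmid t$ argument cannot reach.
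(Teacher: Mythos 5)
Your proposal is correct, and in the forward direction it takes a genuinely different---and in fact sounder---route than the paper. The paper argues asymmetrically: to show $\ell_{\xi,p}=t$ implies $(f_{\xi,p},\Phi_t)\neq 1$ when $t\mid p\pm 1$, it works with the $3\times 3$ matrix $\phi_{\xi,p}$, deducing $\prod_{d\mid t}\Phi_d(\phi_{\xi,p})=0$ from $\phi_{\xi,p}^t=I$, then asserting $\Phi_t(\phi_{\xi,p})=0$ and $f_{\xi,p}(\phi_{\xi,p})=0$ and deriving a contradiction from a B\'ezout identity; only the converse goes through the eigenvalue $\lambda$ and the order of the semisimple element $\psi_{\xi,p}$. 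You instead prove the single equivalence $\ell_{\xi,p}=\operatorname{ord}(\lambda)$ (away from the identity and $p$-cycle cases, which you isolate via the discriminant) and convert $\operatorname{ord}(\lambda)=t$ into $(f_{\xi,p},\Phi_t)\neq 1$ by the shared-root characterization of the gcd, so both implications fall out of one argument. This buys real rigor: the paper's two intermediate claims are false as stated, because $\phi_{\xi,p}$ always has eigenvalue $1$---for $\xi$ pure mod $p$ one has $f_{\xi,p}=(x+1)^2$ and $\ell_{\xi,p}=2$, yet $\Phi_2(\phi_{\xi,p})=\phi_{\xi,p}+I\neq 0$ and $f_{\xi,p}(\phi_{\xi,p})=(\phi_{\xi,p}+I)^2\neq 0$---and the inference ``$AB=0$ with $A\neq 0$ forces $B=0$'' is invalid in a matrix ring in any case; the paper's computation is only repaired by running it on the $2\times 2$ element $\psi_{\xi,p}$, whose characteristic polynomial really is $f_{\xi,p}$, which is essentially what your eigenvalue argument does. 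Your treatment of $t=p$ (via $\Phi_p\equiv (x-1)^{p-1}$ and the fixed-point count of Theorem~\ref{thm1}) coincides with the paper's. One caveat applies to you and the paper alike: for $t$ a proper multiple of $p$ the ``if'' direction is never addressed, and it genuinely fails (for $\xi$ pure mod $p$, $\Phi_{2p}\equiv (x+1)^{p-1}\pmod p$ shares the factor $x+1$ with $f_{\xi,p}=(x+1)^2$, yet $\ell_{\xi,p}=2\neq 2p$), so the statement must implicitly restrict to $p\nmid t$ or $t=p$; your observation that only $t=p$ is ``relevant'' among multiples of $p$ disposes of the forward implication for such $t$, but not of this converse.
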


\begin{proof}
  Let $\xi \equiv a+bi+cj+dk \pmod{p}$ and
  $\operatorname{N}(\xi)\equiv q\pmod{p}$. Since $p$ is odd, and $\xi$
  is not congruent to an integer modulo $p$, the metacommutation map
  $\tau_{\xi,p}$ is not the identity, hence it has non-trivial cycles.

  We know, from our earlier discussion, that we can think of
  $\tau_{\xi,p}$ as the left action of the matrix $\phi_{\xi,p}$ on
  the points of the conic $ C_p$. Note that in $C_p$ we have $3$
  linearly independent vectors. If the non-trivial cycles have length
  $\ell_{\xi,p}=t$, which has to be $p$ or a divisor of $p\pm1$,
  depending on the number of fixed points, it follows that
  $(\phi_{\xi,p}) ^t$ fixes every point of the conic $C_p$, and
  therefore it must be the identity operator. In other words, we must
  have $(\phi_{\xi,p})^t=I$ and $(\phi_{\xi,p})^d\neq I,$ for all
  $d \in \{1,\dots,t-1\}$.

  In the case $\ell_{\xi,p}=p$, the metacommutation map $\tau_{\xi,p}$
  has only one fixed point, thus it follows from Theorem 1 that
  $a^2\equiv q \pmod{p}$, which means that $f_{\xi,p}(x)=(x-1)^2$. The
  $p^{\rm\, th}$ cyclotomic polynomial over $\mathbb{F}_p$ is simply
  $(x-1)^{p-1}$, so $\left(f_{\xi,p}(x), \Phi_p(x)\right)\neq 1$. Now,
  if $(f_Q(x),\Phi_p(x))\neq1$, then $(x-1)$ must divide $f_{\xi,p}$
  and we get that $a^2=q$. Thus, there is only one fixed point and the
  remaining points form a cycle of length $p$, i.e., $\ell_{\xi,p}=p$.

  In the case $\ell_{\xi,p}\mid p\pm1$, $p$ does not divide
  $\ell_{\xi,p}=t$, and thus, from Theorem 2.45 in
  \cite{LidlNiederreiter}, the polynomial $x^t-1$ can be written as
  $\Pi_{d \mid t} \Phi_d(x)$, where $\Phi_d(x)$ is the $d^{\rm\, th}$
  cyclotomic polynomial over $\mathbb{F}_p$. By the Cayley-Hamilton
  theorem, we have that $\phi_{\xi,p}$ satisfies the equation
  $x^t-I=0$, where $I$ denotes the identity matrix. Hence, we must
  have $\Pi_{d \mid t} \Phi_d(\phi_{\xi,p})=0$. Therefore,
  $\Phi_t(\phi_{\xi,p})=0$, since $\Phi_d(\phi_{\xi,p})\neq 0$, for
  every $d < t$, as otherwise we would have $(\phi_{\xi,p})^d=I$. From
  the fact that the characteristic polynomial of $\phi_{\xi,p}$ is
  $(x-1)f_{\xi,p}(x)$, one must also have that $\phi_{\xi,p}$
  satisfies the equation $f_{\xi,p}(\phi_{\xi,p})=0$. Thus,
  $\Phi_t(\phi_{\xi,p})=0$ and $f_{\xi,p}(\phi_{\xi,p})=0$. If we
  suppose $\left(f_{\xi,p}(x),\Phi_t(x)\right)=1$, then it must exist
  polynomials $g$ and $h$ such that
  $g(x)f_{\xi,p}(x)+h(x)\Phi_t(x)=1$. In particular this implies that
  $g(\phi_{\xi,p})f_{\xi,p}(\phi_{\xi,p})+h(\phi_{\xi,p})\Phi_t(\phi_{\xi,p})=I$,
  and so $0=I$, which is a contradiction.

  For the reverse implication, let
  $\left(f_{\xi,p}, \Phi_t\right) \neq 1$. From this it follows that
  there exists a $\lambda$ in the algebraic closure of $\mathbb{F}_p$,
  i.e., $\lambda \in \bar{\mathbb{F}}_p$, such that
  $f_{\xi,p}(\lambda)=\Phi_t(\lambda)=0$. Therefore, since $\lambda$
  satisfies the $t^{\rm\, th}$ cyclotomic polynomial, $\lambda^t=1$
  and $\lambda^d \neq 1$ for all $d < t$, which implies that its order
  is $t$. Since $f_{\xi,p}$ is the characteristic polynomial of a
  specific element $\psi_{\xi,p} \in SO_2(g_t)$, we know, from
  Proposition 5.3 in \cite{CohnKumar}, that $\psi_{\xi,p}$ is
  semisimple and its order is equal to the order of $\lambda$. Also,
  from our previous discussion, we know that we can reduce the study
  of the orbits of $\phi_{\xi,p}$ on $C_p$ to the study of the cyclic
  action of the corresponding $\psi_{\xi,p}$ on the affine conic
  $D$. Hence, the order of $\phi_{\xi,p}$ must be the same as the
  order of $\psi_{\xi,p}$. Therefore, since $\psi_{\xi,p}$ has order
  $t$, the non-trivial cycles must all have order $t$, that is,
  $\ell_{\xi,p}=t$.
\end{proof}

\begin{rmk}
  We know that $f_{\xi,p}$ and $\Phi_t$ have a common divisor in
  $\mathbb{F}_p$ if and only if $R(f_{\xi,p},\Phi_t)\equiv0\pmod{p}$,
  where $R$ denotes the resultant between the two polynomials, and
  therefore $\ell_{\xi,p}=t$ if and only if, modulo $p$,
  $R(f_{\xi,p},\Phi_t)=0$.
\end{rmk}

From this Theorem one can extract criteria for when the length
$\ell_{\xi,p}$ is equal to some prescribed value. In particular, the
results N. Tsopanidis presented in his PhD thesis \cite{Nikos}, for
when $\ell_{\xi, p}$ is equal to $2$ or $3$, can be obtain from this
theorem.

\begin{cor}[Propositions 5.4 and 5.5 of \cite{Nikos}]
  The non-trivial cycles of $\tau_{\xi,p}$ have length $2$ if and only
  if $\xi$ is pure modulo $p$, and they have length $3$ if and only if
  $\N(\xi)\equiv \operatorname{Tr}(\xi)^2 \pmod{p}$.
\end{cor}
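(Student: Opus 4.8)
The plan is to apply Theorem~\ref{mainthm} with $t=2$ and $t=3$, reducing each equivalence to an explicit evaluation of $f_{\xi,p}$. Throughout I would use that $p$ and $q$ are distinct primes, so $q\not\equiv 0\pmod p$ and the coefficient $2-\operatorname{Tr}(\xi)^2/q$ of $f_{\xi,p}$ is a well-defined element of $\mathbb{F}_p$. By the theorem, the non-trivial cycles have length $2$ (resp.\ $3$) precisely when $\left(f_{\xi,p},\Phi_2\right)\neq 1$ (resp.\ $\left(f_{\xi,p},\Phi_3\right)\neq 1$), so the whole argument amounts to detecting a common root of $f_{\xi,p}$ with the relevant cyclotomic polynomial.

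For the length-$2$ case, note that $\Phi_2(x)=x+1$ is linear, so $\left(f_{\xi,p},\Phi_2\right)\neq 1$ holds if and only if $(x+1)\mid f_{\xi,p}$, i.e.\ $f_{\xi,p}(-1)=0$. A direct substitution gives $f_{\xi,p}(-1)=1-\left(2-\operatorname{Tr}(\xi)^2/q\right)+1=\operatorname{Tr}(\xi)^2/q$, which vanishes in $\mathbb{F}_p$ exactly when $\operatorname{Tr}(\xi)\equiv 0\pmod p$. Since $\operatorname{Tr}(\xi)=2\Re(\xi)$ and $p$ is odd, this is equivalent to $\Re(\xi)\equiv 0\pmod p$, i.e.\ to $\xi$ being pure modulo $p$.

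For the length-$3$ case, the roots of $\Phi_3(x)=x^2+x+1$ in $\bar{\mathbb{F}}_p$ are precisely the primitive cube roots of unity, so $\left(f_{\xi,p},\Phi_3\right)\neq 1$ holds if and only if $f_{\xi,p}(\omega)=0$ for such a root $\omega$, which satisfies $\omega^2=-\omega-1$. Substituting and simplifying, I expect $f_{\xi,p}(\omega)=\left(1-\operatorname{Tr}(\xi)^2/q\right)\omega$; as $\omega\neq 0$, this vanishes exactly when $\operatorname{Tr}(\xi)^2\equiv q\equiv\N(\xi)\pmod p$, which is the claimed criterion.

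There is no serious obstacle here: the content is entirely in the two short evaluations. The only points requiring care are the passage from ``$\left(f_{\xi,p},\Phi_t\right)\neq 1$'' to ``$f_{\xi,p}$ and $\Phi_t$ share a root in $\bar{\mathbb{F}}_p$,'' and the degenerate prime $p=3$, for which $\Phi_3=(x-1)^2$ has a repeated root rather than two distinct primitive cube roots. Reassuringly, the substitution formula $f_{\xi,p}(\omega)=\left(1-\operatorname{Tr}(\xi)^2/q\right)\omega$ still evaluates correctly at the single root $\omega=1$ in that case, so the criterion holds uniformly without a separate argument.
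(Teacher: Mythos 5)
Your proposal is correct and follows essentially the same route as the paper: invoke Theorem~\ref{mainthm} for $t=2,3$ and detect a common factor of $f_{\xi,p}$ with $\Phi_2$ and $\Phi_3$ by a short computation (the paper phrases this via the factorization $f_{\xi,p}=(x+1)^2$ and the resultant, you via evaluation at the roots, which is the same content). Your treatment is if anything slightly more complete than the paper's, since you carry out the length-$3$ case explicitly and correctly check the degenerate prime $p=3$, where $\Phi_3=(x-1)^2$, which the paper leaves implicit.
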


As an example, suppose that $\ell_{\xi,p}=2$. It follows that
$\left(f_{\xi,p}, \Phi_2\right)\neq 1$, or equivalently, that we must
have $R(f_{\xi,p},\Phi_2)=0$. Since $\Phi_2=x+1$ and the degree of
$f_{\xi,p}$ is two, our condition can only happen when
$f_{\xi,p}=(x+1)^2$, hence
$\left(2-\frac{\operatorname{Tr}(\xi)^2}{q}\right)$ must be equal to
$2$, i.e., $\Re(\xi)\equiv 0 \pmod{p}$.

The cases $\ell_{\xi,p} =4$ and $\ell_{\xi,p} =6$, which were treated
by A. Leite, in his Master's Thesis \cite{Antonio}, can also be easily
obtained from Theorem~\ref{mainthm}.

\begin{cor}[Propositions 3.3 and 3.4 of \cite{Antonio}]
  The non-trivial cycles of $\tau_{\xi,p}$ have length $4$ if and only
  if $2\N(\xi)\equiv \operatorname{Tr}(\xi)^2 \pmod{p}$, and they have
  length $6$ if and only if
  $3\N(\xi)\equiv \operatorname{Tr}(\xi)^2 \pmod{p}$.
\end{cor}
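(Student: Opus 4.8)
The plan is to apply Theorem~\ref{mainthm} together with the Remark following it: the non-trivial cycles of $\tau_{\xi,p}$ have length $t$ exactly when $R(f_{\xi,p},\Phi_t)\equiv 0\pmod p$. Writing $s=2-\frac{\operatorname{Tr}(\xi)^2}{q}$ for the linear coefficient of $f_{\xi,p}(x)=x^2+sx+1$ (and recalling $q\equiv\N(\xi)\pmod p$), it therefore suffices to evaluate the two resultants $R(f_{\xi,p},\Phi_4)$ and $R(f_{\xi,p},\Phi_6)$ as polynomials in $s$ and to read off when they vanish. Since $p$ is odd, $\Phi_4(x)=x^2+1$; and, provided $p\neq 3$, $\Phi_6(x)=x^2-x+1$. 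Both are monic quadratics, so I would compute each resultant as the product of $f_{\xi,p}$ over the roots of $\Phi_t$, using the relations those roots satisfy.

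For $t=4$, the roots of $\Phi_4$ are the two square roots $\pm\iota$ of $-1$ in $\bar{\mathbb{F}}_p$. Then $f_{\xi,p}(\pm\iota)=(\iota^2+1)\pm s\iota=\pm s\iota$, so $R(f_{\xi,p},\Phi_4)=f_{\xi,p}(\iota)\,f_{\xi,p}(-\iota)=-s^2\iota^2=s^2$. Hence the resultant vanishes if and only if $s=0$, i.e. $\frac{\operatorname{Tr}(\xi)^2}{q}=2$, which is precisely $2\N(\xi)\equiv\operatorname{Tr}(\xi)^2\pmod p$.

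For $t=6$, let $\omega,\bar\omega$ be the two roots of $\Phi_6$, so that $\omega+\bar\omega=1$, $\omega\bar\omega=1$, and $\omega^2=\omega-1$. Reducing $f_{\xi,p}(\omega)=\omega^2+s\omega+1$ by means of $\omega^2=\omega-1$ gives $f_{\xi,p}(\omega)=(1+s)\omega$, and likewise $f_{\xi,p}(\bar\omega)=(1+s)\bar\omega$; therefore $R(f_{\xi,p},\Phi_6)=(1+s)^2\,\omega\bar\omega=(1+s)^2$. This vanishes if and only if $s=-1$, i.e. $\frac{\operatorname{Tr}(\xi)^2}{q}=3$, which is $3\N(\xi)\equiv\operatorname{Tr}(\xi)^2\pmod p$.

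The two computations are routine, so the only genuine care needed lies in the reduction of $\Phi_t$ modulo $p$. For $t=4$ this is unproblematic because $p$ is odd; for $t=6$ the identity $\Phi_6(x)=x^2-x+1$ over $\mathbb{F}_p$ requires $p\neq 3$, and I would dispose of $p=3$ separately by noting that $\ell_{\xi,p}=6$ cannot occur there, since a length-$6$ cycle would force $6\mid p$ or $6\mid p\pm1$. As a sanity check, observe that in both cases the vanishing of the resultant is equivalent to $f_{\xi,p}$ coinciding with $\Phi_t$ itself: the two roots of $f_{\xi,p}$ are mutually inverse (their product being the constant term $1$), matching the fact that the roots of $\Phi_4$ and of $\Phi_6$ are closed under inversion, and both polynomials are monic of degree $2$; so $s=0$ gives $f_{\xi,p}=x^2+1=\Phi_4$ and $s=-1$ gives $f_{\xi,p}=x^2-x+1=\Phi_6$, exactly as one expects when the two roots are required to be primitive $t^{\rm\,th}$ roots of unity.
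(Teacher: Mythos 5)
Your proposal is correct and takes essentially the same approach as the paper: invoke Theorem~\ref{mainthm} together with the resultant remark, and determine when $f_{\xi,p}$ shares a root with $\Phi_4$ or $\Phi_6$, which (both being monic quadratics whose root sets are closed under inversion) amounts to $f_{\xi,p}=\Phi_t$, i.e.\ to the linear coefficient being $0$ or $-1$. If anything your write-up is more careful than the paper's, which only sketches the $t=4$ case as an illustration and never addresses the degenerate reduction of $\Phi_6$ when $p=3$, a point you dispose of explicitly.
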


Set $\alpha = 2-\frac{\operatorname{Tr}(\xi)^2}{q}$ and rewrite
$f_{\xi,p}$ as $x^2+\alpha x+1$. As an example, consider the
particular case $\ell_{\xi,p}=4$. The previous corollary tell us that
this can only occur if
$2\N(\xi)\equiv \operatorname{Tr}(\xi)^2\pmod{p}$. This means that
$\alpha=0$, and so $f_{\xi,p}(x)=x^2+1=\Phi_4(x)$.  From Theorem
\ref{thm1}, the number of fixed points of $\tau_{\xi,p}$ is given by
$1+\left(\frac{\Re(\xi)^2-q}{p}\right)$, thus if the non-trivial
cycles have length $4$, we have that $2\Re(\xi)^2\equiv q \pmod{p}$,
and so the number of fixed points is
$1+\left(\frac{-\Re(\xi)^2}{p}\right)=1+\left(\frac{-1}{p}\right)$.
Therefore, when $\ell_{\xi,p}=4$ the number of fixed points does not
depend on the quadratic character of $\Re(\xi)^2-q$, but simply on the
quadratic character of $-1 \pmod{p}$.  This means that we have two
fixed points if $p\equiv 1 \pmod{4}$ and we do not have fixed points
if $ p\equiv 3 \pmod{4}$. Thus, for the primes $p$ of the form $4k+1$,
the permutation consists of $2$ fixed points and $(p-1)/4$ cycles of
length $4$, and for the primes of the form $4k+3$ the permutation
consists of $(p+1)/4$ cycles of length $4$.


Now suppose that given an odd prime $p$ we want to find a quaternion
$\xi$ such that $\ell_{\xi,p}=5$. First note that this can only happen
if $p=5$ or if $5\mid p\pm1$. Computing the resultant of $f_{\xi,p}$
and $\Phi_5$ one obtains $\alpha^2-\alpha-1\equiv 0 \pmod{p}$. Hence
$\alpha^2\equiv \alpha+1 \pmod{p}$. For example, for $p=19$ and
requiring that $\Re(\xi)=1$, by doing some calculations, our condition
is equivalent to $\N(\xi)\equiv7\pmod{19}$, and so we can use, for
example, the quaternion $\xi=1+2i+j+k$.

Consider now the situation where we are given a prime $p$ and a
quaternion $\xi$, and we want to find $\ell_{\xi,p}$. For example, let
again $p=19$ and $\xi=3-2i-2j$. We want to find $t>1$, such that
$(f_{\xi,p}, \Phi_t)\neq 1$. In order to do that, notice that, modulo
$19$, we have
\begin{align*}
f_{\xi,p}(x)=x^2+\left(2-\frac{36}{17}\right)x+1 =\Phi_3(x).
\end{align*}
Hence, $\ell_{\xi,19}=3$, which means the non-trivial cycles have
length precisely $3$.

An interesting case arises when $\ell_{\xi,p}$ is exactly equal to
$p$. In this case $\tau_{\xi,p}$ has only one fixed point and the rest
of primes lie in a cycle of length $p$. From our earlier discussion,
this can only happen when $\Re(\xi)^2\equiv \N(\xi)\pmod{p}$. We now
prove that, in this case, we can always find a quaternion $\xi$ in
these conditions. In fact, we will prove that we can always find two
different quaternions $\xi$ and $\xi'$ for which
$\ell_{\xi,p}=\ell_{\xi',p}=p$, but the fixed points are different.

\begin{prop}
  Given any odd prime $p$ we can always find a prime quaternion $\xi$
  such that $\ell_{\xi,p}=p$.
\end{prop}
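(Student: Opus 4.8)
The plan is to reduce the claim to finding a quaternion $\xi$ with $\Re(\xi)^2 \equiv \N(\xi) \pmod{p}$, since the paper has already established that $\ell_{\xi,p}=p$ occurs precisely when this congruence holds (and $\xi$ is a prime quaternion not congruent to a rational integer modulo $p$). So the task becomes constructive: exhibit, for every odd prime $p$, a Hurwitz prime $\xi$ whose real part and norm satisfy the required relation.

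First I would normalize the search. Writing $\xi = a+bi+cj+dk$, the condition $\Re(\xi)^2 \equiv \N(\xi) \pmod p$ translates into $a^2 \equiv a^2+b^2+c^2+d^2 \pmod p$, i.e.\ $b^2+c^2+d^2 \equiv 0 \pmod p$. Thus I need the vector part $\mathcal V(\xi)$ to be an isotropic vector for the form $x^2+y^2+z^2$ modulo $p$, while simultaneously ensuring that $\xi$ is genuinely a \emph{prime} quaternion (its norm must be an actual rational prime, not merely satisfy a congruence) and that $\xi$ is \emph{not} congruent to a rational integer modulo $p$ (so the vector part is not entirely divisible by $p$). The conic $C_p$ is nonempty for every odd prime $p$ — indeed $x^2+y^2+z^2=0$ always has nontrivial solutions over $\mathbb F_p$ — so isotropic vectors exist; the real content is lifting such a solution to an honest prime quaternion.

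The construction I would carry out is: pick a nonzero isotropic triple $(b_0,c_0,d_0) \in \mathbb F_p^3$ with $b_0^2+c_0^2+d_0^2 \equiv 0 \pmod p$ and not all $\equiv 0$. Lift it to integers $(b,c,d)$ with that congruence, then choose an integer $a$ and adjust so that $\N(\xi)=a^2+b^2+c^2+d^2$ is a rational prime. Here I expect to invoke a Dirichlet-type or quadratic-form argument: the values $a^2+b^2+c^2+d^2$ as $a$ ranges over a suitable residue class represent infinitely many primes, or alternatively one appeals directly to the fact that every sufficiently large integer not of an excluded form is a sum of four squares with prescribed residues. Since the congruence $b^2+c^2+d^2\equiv 0 \pmod p$ is preserved under adding multiples of $p$ to $b,c,d$, and since $\Re(\xi)^2 \equiv \N(\xi)$ depends only on residues, there is ample freedom to force the norm to be prime while keeping the vector part nonzero modulo $p$.

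The main obstacle will be guaranteeing primality of the norm simultaneously with the two congruence and nondegeneracy constraints — this is where a clean elementary argument is needed rather than a heavy analytic input. I would look to sidestep the difficulty by working directly in $\mathcal H_p$: the bijection $\Pi_p \to C_p$ means that \emph{any} point of $C_p$ is $c_\pi$ for some prime $\pi$ above $p$, so rather than constructing $\xi$ from scratch I would take the point of $C_p$ corresponding to an isotropic vector and pull back along the bijection to obtain the desired prime class directly, then verify that its real part and norm meet the condition. The final step, establishing the existence of a \emph{second} quaternion $\xi'$ with $\ell_{\xi',p}=p$ but a different fixed point, should follow by the same construction applied to a different isotropic point of $C_p$, using that $C_p$ contains more than one point for every odd prime $p$.
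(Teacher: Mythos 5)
Your reduction of the problem --- find a prime quaternion $\xi=a+bi+cj+dk$, not congruent to a rational integer modulo $p$, with $b^2+c^2+d^2\equiv 0\pmod p$ and $\N(\xi)$ an honest rational prime --- is exactly the paper's starting point. But both routes you propose for completing the construction fail. First, fixing an isotropic triple $(b,c,d)$ and letting $a$ vary means you need primes of the form $a^2+N$ with $N=b^2+c^2+d^2$ fixed; this is a \emph{quadratic} polynomial in $a$, so Dirichlet's theorem does not apply, and the existence of even one prime (let alone infinitely many) in such a sequence for arbitrary $N$ is not something current methods give --- for $N=1$ this is Landau's open problem. Your remark that there is ``ample freedom'' from shifting $b,c,d$ by multiples of $p$ is not an argument; it just replaces one quadratic family by another. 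Second, the proposed sidestep through the bijection $\Pi_p\to C_p$ is a category error: pulling back a point of $C_p$ produces a Hurwitz prime \emph{above} $p$, i.e.\ of norm $p$, whereas the metacommutation map $\tau_{\xi,p}$ is only defined for $\xi$ of norm $q$ a rational prime \emph{distinct} from $p$. The conic $C_p$ parametrizes the objects being permuted, not the candidates for $\xi$, so no choice of point of $C_p$ can serve as the quaternion you are looking for.

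The paper escapes the primality obstruction by reversing the order of quantifiers: instead of fixing the vector part and hunting for a prime norm, it fixes $a=2$ and first chooses the norm to be prime, using Dirichlet on a \emph{linear} progression. Writing $p\equiv r\pmod 8$ with $r\in\{1,3,5,7\}$, the numbers $4+(8k+r)p=(4+pr)+8pk$ form an arithmetic progression with $\gcd(4+pr,8p)=1$, so some $4+(8k_0+r)p$ is prime. Then, since $(8k_0+r)p\equiv r^2\equiv 1\pmod 8$, Legendre's three-squares theorem (in its primitive form, \cite[Theorem 30.1.3]{Voight}) supplies $b,c,d$ with $b^2+c^2+d^2=(8k_0+r)p$ and $\gcd(b,c,d)=1$; the primitivity guarantees $(b,c,d)\not\equiv(0,0,0)\pmod p$, so $\xi=2+bi+cj+dk$ is a Hurwitz prime not congruent to an integer modulo $p$ satisfying $\Re(\xi)^2\equiv\N(\xi)\pmod p$. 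The essential idea your proposal is missing is precisely this interchange: make the target number prime first (possible because it lies in a linear family), and only afterwards solve for the vector part, which the three-squares theorem always permits.
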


 \begin{proof}
   Let $\xi \equiv a+bi+cj+dk \pmod{p}$ and
   $\operatorname{N}(\xi)\equiv q\pmod{p}$.  Following the above
   discussion, we know that for a $\xi$ to induce a permutation with
   only a fixed point and cycle of length $p$, we must have that
   $a^2 \equiv q \pmod{p} $, i.e., $q=a^2+pk$, for some $k \in \bbZ$.
   Hence, we need to show that for any odd prime $p$ we can always
   find $\xi =a+bi+cj+dk$ such that $\N(\xi)=a^2+pk$, for some
   $k \in \mathbb{Z}$. For that, we fix $a=2$ and choose
   $r \in \mathbb{N}$, with $r<8$ such that $p\equiv r \pmod{8}$. As
   $p$ is odd, we must have $r \in \{1,3,5,7\}$. Then, for any
   $k \in \bbN$,
   \[ (8k+r)p \equiv r^2\equiv 1 \pmod{8}. \] By Legendre's three
   squares theorem, this means that we can write the number $(8k+r)p$
   as a sum of three squares. Furthermore, by Dirichlet's Theorem, the
   sequence
   \[ 4+(8k+r)p = (4+pr) + 8pk \quad (k\in\bbN)\] has an infinitely
   many primes, since $4+pr$ is odd and not divisible by $p$ and so
   $(4+pr,8p)=1$. Thus, we can choose $k_0 \in \bbN$, so that
   $4+(8k_0+r)p$ is a prime number. Now, we know that we can write the
   number $(8k_0+r)p$ as a sum of three squares and, in fact, by
   \cite[Theorem 30.1.3]{Voight}, we can write as
   $(8k_0+r)p = b^2+c^2+d^2$, with $\gcd(b,c,d)=1$ and $b,c \neq
   0$. Therefore, the quaternion $\xi=2+bi+cj+dk$ fixes only one point
   of our permutation, the remainder belonging to a cycle of length
   $p$.
\end{proof}

\section{Fixed points of the metacommutation map}

As noted by Cohn and Kumar in \cite{CohnKumar}, if $\pi$ is a fixed
point of the metacommutation map by $\xi$, it is a left and right
divisor of $\pi\xi=\xi'\pi$. This motivates us to recall a result
presented in \cite{Common}, which gives the conditions under which a
quaternion in $\mc H$ has common left and right divisors.

Let $\alpha$ be a primitive quaternion in $\mc H$ whose norm is
divisible by a positive integer $m$. To know if $\alpha$ has any
common left and right divisor of norm $m$ is the same as to know if
exist $\beta, \gamma$ and $\gamma'$ in $\mc H$ such that $\N(\beta)=m$
and $\alpha=\beta \gamma =\gamma ' \beta$.  If we define the sets:

\begin{equation*}
  L_m(\alpha)=\{\beta \in \mc H \mid \alpha=\beta \gamma \text{ with }
  \gamma \in \mc H \text{ and } \N(\beta)=m \}, 
\end{equation*}
and
\begin{equation*}
  R_m(\alpha)=\{\beta \in \mc H \mid \alpha= \gamma ' \beta \text{
    with } \gamma ' \in \mc H \text{ and } \N(\beta)=m \}, 
\end{equation*}
as the sets of left and right factors of $\alpha$ with norm $m$, then
we want to determine
\begin{equation*}
  L_m(\alpha) \cap R_m(\alpha),
\end{equation*}
for a given $\alpha$ and $m \mid \N(\alpha)$. As in \cite{Common} we
make following distintion: a quaternion $\alpha\in \mc H$ is of
\textbf{type $(I)$} if $\alpha \in \mc L$. Otherwise is of
\textbf{type $(II)$}. For any element
$\alpha= a_0+a_1 i + a_2 j + a_3 k \in \mc H$, we set $a_i'=a_i$ if
$\alpha$ has type $(I)$ and $a_i'=2a_i$ if $\alpha$ is type $(II)$.

\begin{theorem}[Theorem 1 in \cite{Common}]\label{CommF}
  Suppose that $\alpha= a_0+a_1 i + a_2 j + a_3 k \in \mc H$ is
  primitive and let $m$ be an odd integer such that
  $m \mid \N(\alpha)$. Then,

\begin{equation*}
  L_m(\alpha) \cap R_m(\alpha) \neq \emptyset
\end{equation*}
if and only if there exists
$\beta= b_0+b_1 i + b_2 j + b_3 k \in \mc H$ such that $\N(\beta)=m$
and $a_i'b_j' \equiv a_j'b_i' \pmod{m}$, for all $i \neq j$. Moreover,
$\beta \in L_m(\alpha) \cap R_m(\alpha) $.
\end{theorem}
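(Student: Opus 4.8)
The plan is to reduce the two-sided divisibility to a pair of quaternion congruences and then extract the stated integer conditions. First I would record the basic divisor criterion: since $\beta\bar\beta=\N(\beta)=m$, a quaternion $\beta$ with $\N(\beta)=m$ lies in $L_m(\alpha)$ if and only if $\bar\beta\alpha\in m\mc H$ (the cofactor being $\gamma=\bar\beta\alpha/m$), and lies in $R_m(\alpha)$ if and only if $\alpha\bar\beta\in m\mc H$ (with $\gamma'=\alpha\bar\beta/m$). Thus $L_m(\alpha)\cap R_m(\alpha)\neq\emptyset$ exactly when there is a $\beta$ with $\N(\beta)=m$ for which both $\bar\beta\alpha$ and $\alpha\bar\beta$ are divisible by $m$ in $\mc H$; this observation also gives the ``moreover'' clause for free, since the witnessing $\beta$ is the one produced by the cofactors.

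Next I would compute the two products. Writing $\alpha=a_0+\vec a$ and $\beta=b_0+\vec b$ with $\vec a,\vec b$ the vector parts, one finds $\bar\beta\alpha=(a_0b_0+\vec a\cdot\vec b)+(b_0\vec a-a_0\vec b)+\vec a\times\vec b$ and $\alpha\bar\beta=(a_0b_0+\vec a\cdot\vec b)+(b_0\vec a-a_0\vec b)-\vec a\times\vec b$, so the two products share their real part and their symmetric vector part and differ only in the sign of the cross-product term. Since $m$ is odd, $2$ is invertible modulo $m$, and adding and subtracting shows that $m$ divides both products if and only if it divides (a) the common real part $a_0b_0+\vec a\cdot\vec b$, (b) the symmetric vector part $b_0\vec a-a_0\vec b$, and (c) the cross product $\vec a\times\vec b$. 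Passing to the integral coordinates $a_i',b_i'$, conditions (b) and (c) are precisely the six minor relations $a_i'b_j'\equiv a_j'b_i'\pmod m$ for $i\neq j$ (the indices $0j$ coming from (b), and the indices $ij$ with $i,j\in\{1,2,3\}$ from (c)); the constants $1,2,4$ relating $a_ib_j$ to $a_i'b_j'$ are prime to $m$ and so do not affect the congruences.

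This already yields the forward implication, and reduces the converse to showing that condition (a) is \emph{redundant}: I claim the minor relations together with $m\mid\N(\alpha)$ and $\N(\beta)=m$ force $m\mid a_0b_0+\vec a\cdot\vec b$, equivalently $m\mid\sum_i a_i'b_i'$. This is the heart of the argument, and the only place the primitivity of $\alpha$ is used. Working modulo each prime power $p^{e}\,\|\,m$ and reassembling by the Chinese Remainder Theorem, I note that since $\alpha$ is primitive, the odd prime $p$ cannot divide all of $a_0',\dots,a_3'$, so some $a_{i_0}'$ is a unit modulo $p^{e}$. The minor relations $a_{i_0}'b_j'\equiv a_j'b_{i_0}'$ then give $b_j'\equiv\lambda\,a_j'\pmod{p^{e}}$ for every $j$, where $\lambda=b_{i_0}'/a_{i_0}'$, so the coordinate vector of $\beta'$ is $\lambda$ times that of $\alpha'$ modulo $p^{e}$. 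Feeding this in, $\sum_i a_i'b_i'\equiv\lambda\sum_i (a_i')^2\equiv\lambda\,c_\alpha\N(\alpha)\equiv 0\pmod{p^{e}}$, because $p^{e}\mid m\mid\N(\alpha)$ and the type-constant $c_\alpha\in\{1,4\}$ (with $\sum_i(a_i')^2=c_\alpha\N(\alpha)$) is prime to $m$. Collecting over all $p^{e}\,\|\,m$ recovers condition (a), completing the converse.

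Finally I would settle the type bookkeeping, which I expect to be the main obstacle—not through any hard estimate, but through careful tracking of factors of $2$. The delicate point is that ``divisible by $m$ in $\mc H$'' is not the same as ``each ordinary coordinate divisible by $m$'' once half-integer coordinates appear; the remedy, and the reason the primed notation earns its keep, is that for odd $m$ a Hurwitz quaternion lies in $m\mc H$ if and only if its doubled integer coordinates are all divisible by $m$. Carrying the doubled coordinates of $\bar\beta\alpha$ and $\alpha\bar\beta$ through Steps (b)--(c) produces exactly the relations in the $a_i',b_i'$, uniformly across the four type combinations of $\alpha$ and $\beta$, precisely because the extra factors of $2$ coming from the type~$(II)$ normalisation are invertible modulo the odd integer $m$ and never interfere with the congruences.
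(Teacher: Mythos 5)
Your proposal cannot be checked against a proof in this paper, because there is none: Theorem~\ref{CommF} is imported verbatim from \cite{Common} and used as a black box, so the only proof it could be compared with lives in that external reference. Judged on its own terms, your argument is correct and complete. The divisor criterion ($\beta\in L_m(\alpha)$ iff $\bar\beta\alpha\in m\mc H$, and $\beta\in R_m(\alpha)$ iff $\alpha\bar\beta\in m\mc H$, given $\N(\beta)=m$) is right; the computation showing that $\bar\beta\alpha$ and $\alpha\bar\beta$ share the real part $a_0b_0+\vec a\cdot\vec b$ and the symmetric vector part $b_0\vec a-a_0\vec b$, differing only in the sign of $\vec a\times\vec b$, is correct; and since $m$ is odd, adding and subtracting legitimately splits the two divisibilities into the trace condition $m\mid\sum_i a_i'b_i'$ plus the six minor congruences. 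You correctly locate the entire content of the theorem in the redundancy of the trace condition, and your localisation argument settles it: modulo each $p^e\,\|\,m$, primitivity furnishes a unit coordinate $a_{i_0}'$, the minors force $b_j'\equiv\lambda a_j'$ for all $j$, and then $\sum_i a_i'b_i'\equiv\lambda\sum_i(a_i')^2\equiv 0$ because $\sum_i(a_i')^2$ is $\N(\alpha)$ or $4\N(\alpha)$ and $p^e\mid\N(\alpha)$. Two steps that you assert deserve their one-line justifications written out: first, that primitivity prevents an odd $p$ from dividing all $a_i'$ requires a word in the type $(II)$ case --- if $p$ divided every $a_i'=2a_i$, then each $a_i'/p$ would again be odd (an odd integer divided by an odd integer is odd), so $\alpha/p$ would have all half-odd-integer coordinates and lie in $\mc H$, contradicting primitivity; second, the parity argument behind ``for odd $m$, $x\in m\mc H$ iff $m$ divides every coordinate of $2x$'' (the coordinates of $(2x)/m$ inherit the common parity of those of $2x$). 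Both are genuinely trivial, so these are presentational remarks, not gaps.
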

  
Now that we have established Theorem \ref{CommF}, we can look at it
from the point of view that most interest us. For instance, consider a
primitive quaternion $\alpha$ whose norm is $\N(\alpha)=pq$, where $p$
and $q$ are odd rational primes, and for which exists a factorizaion
factorization $\alpha=\pi\xi$, modeled on $pq$. Then, if $\pi$ is a
fixed point of map $\tau_{\xi,p}$ we have
\begin{equation*}
  \pi\in  L_m(\alpha) \cap R_m(\alpha).
\end{equation*}
Therefore, with the help of the Theorem \ref{CommF}, we can establish
the following corollary:
\begin{cor}
  Let $\alpha=a_0+a_1i+a_2j+a_3k$ be primitive such that
  $\N(\alpha)=pq$, with $p$ an odd prime. Then
  $\pi=b_0+b_1i+b_2j+b_3k \in \mc H$ is a fixed point of the
  metacomutation map if and only if $\N(\pi)=p$ and
  $a_i'b_j'\equiv a_j' b_i' \pmod{p}$, for all $i \neq j$.
\end{cor}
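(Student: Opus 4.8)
The plan is to combine Theorem~\ref{CommF} with the observation that being a fixed point of $\tau_{\xi,p}$ is exactly the condition that $\pi$ be a common left and right divisor of $\alpha = \pi\xi$. First I would fix the setting: since $\N(\alpha) = pq$ and $\alpha = \pi\xi$ is modeled on $pq$, the left factor $\pi$ automatically has norm $p$, so $p \mid \N(\alpha)$ and we may take $m = p$ in Theorem~\ref{CommF}. Because $p$ is odd, the hypotheses of that theorem are met, so the theorem applies directly with $\beta = \pi$.

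The forward direction is essentially a translation: if $\pi$ is a fixed point of $\tau_{\xi,p}$, then by the definition of metacommutation $\pi\xi = \xi'\pi'$ with $[\pi'] = [\pi]$, and since a fixed point means $\pi' $ is left associated to $\pi$, after absorbing the unit we get $\pi\xi = \xi''\pi$, exhibiting $\pi$ as both a left and a right divisor of $\alpha$. Hence $\pi \in L_p(\alpha) \cap R_p(\alpha)$, and Theorem~\ref{CommF} then yields $\N(\pi) = p$ together with the congruences $a_i' b_j' \equiv a_j' b_i' \pmod p$ for all $i \neq j$. The reverse direction runs the same equivalence backwards: given $\N(\pi) = p$ and the congruences, Theorem~\ref{CommF} guarantees $\pi \in L_p(\alpha)\cap R_p(\alpha)$, so $\pi$ is simultaneously a left and right divisor of $\alpha$; writing $\alpha = \pi\xi = \xi''\pi$ shows that metacommuting $\xi$ past $\pi$ returns a prime in the class of $\pi$, i.e.\ $\pi$ is fixed by $\tau_{\xi,p}$.

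The step I expect to require the most care is the unit-migration bookkeeping, namely verifying that "$\pi$ is a fixed point of $\tau_{\xi,p}$" (a statement about left-associate classes in $\Pi_p$) is genuinely equivalent to "$\pi$ is a common left and right divisor of $\alpha$" (the condition feeding Theorem~\ref{CommF}). One must be careful that the primed factor $\pi'$ produced by metacommutation is only defined up to a left unit, so "fixed" means $\pi' = u\pi$ for some unit $u$; I would absorb $u$ into the adjacent factor $\xi'$ to recover a clean equality $\pi\xi = \xi''\pi$, and conversely note that any common divisor representative can be normalized to sit in the same class as the given $\pi$. Once this dictionary between the two formulations is pinned down, the corollary is an immediate specialization of Theorem~\ref{CommF} with $\alpha$ primitive of norm $pq$, $m = p$, and $\beta = \pi$, so no further computation is needed.
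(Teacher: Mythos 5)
Your proof is correct and follows essentially the same route as the paper: the paper likewise obtains this corollary by identifying ``$\pi$ is a fixed point of $\tau_{\xi,p}$'' with ``$\pi \in L_p(\alpha)\cap R_p(\alpha)$'' and then invoking Theorem~\ref{CommF} with $m=p$ and $\beta=\pi$. Your explicit attention to the unit-migration bookkeeping (absorbing the unit from $\pi'=u\pi$ into $\xi'$, and cancelling $\pi$ to recover $\gamma=\xi$ in the converse) is a point the paper passes over silently, but it does not change the argument.
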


Recall that $\pi$ is a fixed point of the metacommutation map, then
this map sends the class of $\pi$ to itself, thus every left associate
is also a fixed point. Hence, to check if a point is fixed, we can
always choose a representative in its class that belongs to $\mc L$.
Let us start by dealing with the case $\xi\in \mc L$. Set
$\alpha=\pi\xi=a_0+a_1i+a_2j+a_3k\in \mc L$ with
$\pi=b_0+b_1i+b_2j+b_3k \in \mc L$. If $\pi$ is a fixed point of
$\tau_{Q,p}$, then $\alpha=\pi\xi =\xi'\pi$, for some
$\xi' \in \mc H$, and so we have the system of congruences
\begin{equation*}
  \begin{cases}
    a_0b_1 \equiv a_1b_0\\
    a_0b_2 \equiv a_2b_0\\
    a_0b_3 \equiv a_3b_0\\
    a_1b_2 \equiv a_2b_1\\
    a_1b_3 \equiv a_3b_1\\
    a_2b_3 \equiv a_3b_2\\
  \end{cases}\pmod{p}
\end{equation*}
The first three congruences can be condensed into the unique
congruence
\begin{equation*}
  a_0 \mathcal{V}(\pi) \equiv  b_0 \mathcal{V}(\alpha) \pmod{p},
\end{equation*}
and the last three congruences are simply equivalent to
\begin{equation*}
  \mathcal{V}(\alpha) \times \mathcal{V}(\pi) \equiv 0 \pmod{p}.
\end{equation*}
We have that $p \nmid a_0b_0$, because if $p \mid a_0 b_0$, since
$\N(\pi)=p$, and $p$ cannot divide $b_0$, $p$ must divide
$a_0$. However, $a_0=\pi \cdot \bar{\xi}$, thus we would have that
$p \mid \xi$, which cannot happen.  Therefore, the first congruence
implies the second one. So, modulo $p$, the vector parts of $\alpha$
and $\pi$ are colinear. Notice that we can write the first congruence
of our system as,
\begin{align*}
  (\pi \cdot \bar{\xi})b_1- b_0 (\pi\cdot i \bar{\xi})
  &\equiv 0 \pmod{p} \Leftrightarrow \\ 
  \pi\cdot (b_1-b_0i)\bar{\xi }
  &=0 \Leftrightarrow  \Re((b_1+b_0i)\pi \xi )\equiv0 \pmod{p}.
\end{align*}
Similarly, for the other two congruences follows that,
\begin{equation*}
  \pi \cdot (b_2-b_0j)\bar{\xi}=0 \Leftrightarrow \Re((b_1+b_0j)\pi\xi
  )\equiv0 \pmod{p}, 
\end{equation*}
\begin{equation*}
  \pi\cdot (b_3-b_0k)\bar{\xi}=0 \Leftrightarrow
  \Re((b_1+b_0k)\pi\xi)\equiv0 \pmod{p}. 
\end{equation*}
In our case, the last three equations of our system are implied by the
first three, thus, from the above calculations, we have:
\begin{prop}\label{propfim}
  Given a Hurwitz prime $\pi =b_0+b_1i+b_2j+d_3k\in \mc L$ of odd norm
  $p$, and a prime quaternion $\xi \in \mc L $ such that $\N(\xi )=q$,
  then $\pi$ is a fixed point of $\tau_{\xi ,p}$ if and only if
  $\Re(\delta_i\pi\xi)\equiv 0 \pmod{p}$, where $\delta_1=b_1+b_0i$,
  $\delta_2=b_2+b_0j$, and $\delta_3=b_3+b_0k$.
\end{prop}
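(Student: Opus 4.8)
The plan is to read the fixed-point criterion off the preceding Corollary and then pare it down to the three stated real-part congruences. Since being a fixed point depends only on the left-associate class of $\pi$, I may replace $\pi$ by any Lipschitz representative of its class, and I will choose one whose scalar part $b_0$ is not divisible by $p$; this is possible because $\N(\pi)=p$ is odd, so at least one coordinate of $\pi$ is odd, hence nonzero and of absolute value below $\sqrt p$, hence prime to $p$, and left-multiplication by a suitable unit among $\pm1,\pm i,\pm j,\pm k$ moves it into the scalar slot while keeping $\pi$ in $\mc L$. Because $\xi\in\mc L$ and $\mc L$ is closed under multiplication, $\alpha=\pi\xi$ also lies in $\mc L$, so $\alpha$ and $\pi$ are both of type $(I)$ and $a_i'=a_i$, $b_i'=b_i$. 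The Corollary then says that $\pi$ is a fixed point of $\tau_{\xi,p}$ if and only if $\N(\pi)=p$ and $a_ib_j\equiv a_jb_i\pmod p$ for all $i\neq j$.

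Next I would separate these six congruences into the three carrying the index $0$, namely $a_0b_s\equiv a_sb_0$ for $s=1,2,3$, which amount to the vector congruence $a_0\mc V(\pi)\equiv b_0\mc V(\alpha)\pmod p$, and the remaining three, which amount to $\mc V(\alpha)\times\mc V(\pi)\equiv0\pmod p$. The key claim is that the second triple is redundant. Granting $p\nmid b_0$ from the normalization above, the first triple forces $p\nmid a_0$ as well: were $p\mid a_0$, then $b_0\mc V(\alpha)\equiv0$ would give $\mc V(\alpha)\equiv0$, and together with $a_0=\Re(\alpha)\equiv0$ this yields $p\mid\pi\xi$, which is impossible since $\N(\pi\xi)=pq$ with $p\neq q$. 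Hence $p\nmid a_0b_0$, so $a_0\mc V(\pi)\equiv b_0\mc V(\alpha)$ makes $\mc V(\alpha)$ and $\mc V(\pi)$ proportional and their cross product vanishes for free. Thus the fixed-point condition collapses to the single triple $a_0b_s\equiv a_sb_0$.

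It then remains to repackage this triple in the asserted form. Writing $x\cdot y=\Re(x\bar y)$ for the quaternion inner product, the coordinates of $\alpha=\pi\xi$ are $a_0=\pi\cdot\bar\xi$ and $a_s=\pi\cdot(e_s\bar\xi)$, where $e_1=i$, $e_2=j$, $e_3=k$. Substituting into $a_0b_s-a_sb_0$ and factoring $\bar\xi$ out on the right gives $\pi\cdot\big((b_s-b_0e_s)\bar\xi\big)$; unfolding the inner product (which contributes a conjugation flipping the sign of the $b_0e_s$ term) and using that $\Re$ is invariant under cyclic permutation rewrites the congruence as $\Re(\delta_s\pi\xi)\equiv0\pmod p$ with $\delta_s=b_s+b_0e_s$, which is exactly the stated condition. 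Reading this equivalence forward yields the ``only if'' direction, and reading it backward, where the redundancy step of the second paragraph does the real work, yields the ``if'' direction.

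I expect the main obstacle to be the redundancy argument rather than the algebra: the implication ``first triple $\Rightarrow$ second triple'' asserts that two $4$-vectors whose index-$0$ minors vanish have all their minors vanish, and this is false in general, as the configuration $a_0=b_0=0$, $\mc V(\alpha)=(1,0,0)$, $\mc V(\pi)=(0,1,0)$ shows, so it genuinely requires $p\nmid a_0b_0$. Securing this, and in particular the normalization $p\nmid b_0$ obtained from the choice of Lipschitz representative, is the only delicate point; the passage to the real-part form is routine conjugate bookkeeping.
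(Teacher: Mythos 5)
Your route is the same as the paper's: invoke the preceding Corollary to get the six congruences $a_ib_j'\equiv a_jb_i'\pmod p$, split them into the index-$0$ triple and the cross-product triple, argue the second triple is redundant, and rewrite the first triple as $\Re(\delta_s\pi\xi)\equiv 0$ (that last algebraic step is correct, and your derivation of $p\nmid a_0$ from the first triple is actually cleaner than the paper's). The genuine gap is your opening normalization. The proposition equates a representative-independent statement ($\pi$ is a fixed point) with a representative-dependent one: the $\delta_s$, hence the congruences $\Re(\delta_s\pi\xi)\equiv 0$, are built from the coordinates of the particular $\pi$ in the statement. Replacing $\pi$ by a left associate $u\pi$ does not carry the index-$0$ triple for $\pi$ to the index-$0$ triple for $u\pi$: for instance, passing from $(\pi,\alpha)$ to $(i\pi,i\alpha)$ turns the new index-$0$ minors into the old minors $(0,1)$, $(1,3)$, $(1,2)$ of the matrix of coordinates of $\alpha$ and $\pi$, not the old $(0,1)$, $(0,2)$, $(0,3)$. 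So your argument proves the equivalence for the normalized representative, which is a \emph{different} statement, and transferring it back to the given $\pi$ would require exactly the redundancy (``three minors force all six'') that is at issue. Your own counterexample in the last paragraph shows this transfer cannot be waved through.

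The case your normalization skips is precisely $b_0=0$, which does occur for Lipschitz primes of odd norm (e.g.\ $\pi=i+2j$ of norm $5$). There your hypothesis degenerates: $\delta_s=b_s$ are scalars, and the three conditions collapse to the single congruence $\Re(\pi\xi)\equiv 0\pmod p$, so proportionality of $\mc V(\alpha)$ and $\mc V(\pi)$ is not available and the cross-product triple must be obtained by a different mechanism. It still follows, but only via an identity special to this case: for pure $\pi$ one has $\mc V(\pi\xi)=\Re(\xi)\mc V(\pi)+\mc V(\pi)\times\mc V(\xi)$, hence
\begin{equation*}
  \mc V(\pi\xi)\times\mc V(\pi)
  =\bigl(\mc V(\pi)\times\mc V(\xi)\bigr)\times\mc V(\pi)
  =\mc V(\xi)\,\bigl(\mc V(\pi)\cdot\mc V(\pi)\bigr)
  -\mc V(\pi)\,\bigl(\mc V(\xi)\cdot\mc V(\pi)\bigr),
\end{equation*}
which vanishes modulo $p$ because $\mc V(\pi)\cdot\mc V(\pi)=\N(\pi)=p$ and $\mc V(\xi)\cdot\mc V(\pi)=-\Re(\pi\xi)\equiv 0$. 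Some such argument is needed to close the case; your proof as written does not contain it. (For what it is worth, the paper's proof has the same blind spot, since it asserts $p\nmid b_0$ outright from $\N(\pi)=p$, which fails exactly when $b_0=0$; you correctly identified this as the delicate point, but the WLOG replacement does not repair it.)
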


When $\xi \in \mc H\setminus\mc L$, we only need to change the
conditions to $2\Re(\delta_i\pi\xi)\equiv0$, for each $i$.

\begin{prop}
  Given an odd prime $p$, we can always find two prime quaternions
  $\xi$ and $\xi'$, such that $\ell_{\xi,p}=\ell_{\xi',p}=p$ but in
  which the fixed points are different.
\end{prop}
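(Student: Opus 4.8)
The plan is to locate the unique fixed point of $\tau_{\xi,p}$ directly from $\xi$, and then to perturb $\xi$ so as to move that fixed point while keeping $\ell_{\xi,p}=p$. First I would record where the fixed point sits. When $\ell_{\xi,p}=p$ we have $f_{\xi,p}(x)=(x-1)^2$, so the characteristic polynomial of $\phi_{\xi,p}$ is $(x-1)^3$ and, by Theorem~\ref{thm1}, $\tau_{\xi,p}$ has a single fixed point on $C_p$. Writing $\xi\equiv a+bi+cj+dk\pmod p$ with $\N(\xi)\equiv q\pmod p$, a direct multiplication by the matrix of $\phi_{\xi,p}$ shows that $(b,c,d)$ is fixed (it is the axis of the rotation given by conjugation by $\xi_p$), and $[b:c:d]$ lies on $C_p$ precisely because the defining condition $\Re(\xi)^2\equiv\N(\xi)\pmod p$ of the regime $\ell_{\xi,p}=p$ is the same as $b^2+c^2+d^2\equiv 0\pmod p$. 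Hence the unique fixed point of $\tau_{\xi,p}$ is $[\mathcal V(\xi)]=[b:c:d]\in C_p$.

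Next I would take the prime quaternion $\xi=2+bi+cj+dk$ produced earlier, in the Proposition guaranteeing a $\xi$ with $\ell_{\xi,p}=p$, and define $\xi'$ by reversing the sign of one coordinate of its vector part. A sign change leaves $\Re(\xi)$, $\N(\xi)$ and $b^2+c^2+d^2$ untouched, and it preserves both the parity condition $a\equiv b\equiv c\equiv d\pmod 2$ and the primitivity of $\xi$; therefore $\xi'$ is again a prime quaternion of the same prime norm, still satisfying $\Re(\xi')^2\equiv\N(\xi')\pmod p$, so $\ell_{\xi',p}=p$. By the first paragraph the fixed point of $\tau_{\xi',p}$ is the corresponding sign-reflected point of $C_p$, so all that remains is to choose the flipped coordinate so that this point differs from $[b:c:d]$.

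This last step is the only delicate point, and it is where I would concentrate the argument. Since $\xi$ is not congruent to a rational integer modulo $p$, the reduced vector part $(b,c,d)$ is nonzero in $\mathbb{F}_p^3$; and since $b^2+c^2+d^2\equiv 0\pmod p$, two vanishing coordinates would force the third to vanish as well, so at most one of $b,c,d$ is zero modulo $p$. Thus at least two coordinates are nonzero, and if I flip the sign of one nonzero coordinate while a second remains nonzero, the reflected vector cannot be a scalar multiple of the original: the unflipped nonzero coordinate forces the scalar to be $1$, whereas the flipped one forces it to be $-1$. Choosing the flipped coordinate among the (at least two) nonzero ones therefore produces two prime quaternions $\xi$ and $\xi'$ with $\ell_{\xi,p}=\ell_{\xi',p}=p$ whose fixed points $[\mathcal V(\xi)]$ and $[\mathcal V(\xi')]$ are distinct, as required.
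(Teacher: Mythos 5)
Your proof is correct, and it takes a genuinely different---and cleaner---route than the paper. The paper argues via its fixed-point criterion (Proposition~\ref{propfim}): it assumes the sign-flipped quaternion $\xi'=2-bi+cj+dk$ fixes the \emph{same} prime $\pi=b_0+b_1i+b_2j+b_3k$, derives $2b(b_0^2+b_1^2)\equiv 0\pmod p$, concludes that $p$ would be a sum of two squares (absurd for $p\equiv 3\pmod 4$), and then handles $p\equiv 1\pmod 4$ by further case analysis with auxiliary quaternions $\xi''$ and $\xi'''$, including a sub-case $p\mid c$. You bypass all of this by identifying the unique fixed point explicitly: since $\mathcal{V}(\xi_p)$ commutes with $\xi_p$, the vector $(b,c,d)$ is an eigenvector of $\phi_{\xi,p}$ with eigenvalue $1$, and it lies on $C_p$ exactly when $\Re(\xi)^2\equiv \operatorname{N}(\xi)\pmod p$, which is the condition defining the regime $\ell_{\xi,p}=p$; by Theorem~\ref{thm1} this is the only fixed point, so the fixed class corresponds to $[b:c:d]\in C_p$. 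The sign flip then visibly moves this projective point, and your non-degeneracy argument (the conic equation forces at least two of $b,c,d$ to be nonzero modulo $p$, so flipping one nonzero coordinate while another survives forces the scalar to be both $1$ and $-1$) is exactly what is needed; note that $\gcd(b,c,d)=1$ from the earlier construction guarantees $(b,c,d)\not\equiv(0,0,0)\pmod p$, so this applies. What your approach buys: a proof that is uniform in $p$ (no split on $p\bmod 4$), shorter, and an explicit description of the fixed point as $[\mathcal{V}(\xi)]$, which is of independent interest. What the paper's approach buys: it exercises the Section~3 machinery (the common left/right divisor criterion), at the cost of a longer, case-based argument.
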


\begin{proof}
  We already now that we can find a quaternion $\xi=2+bi+cj+dk$ that
  fixes only one fixed point, say $\pi=b_0+b_1i+b_2j+b_3k$, which we
  assume to be in $\mc L$.  Start by considering
  $\xi'=2-bi+cj+dk=\rho-2bi$. Since $\pi$ is a fixed point of
  $\tau_{\xi,p}$, we know that we must have from Proposition
  \ref{propfim}
  \begin{align}\label{fixed_point}
    \delta_1\pi \xi \cdot 1\equiv 0 \pmod{p} \\
    \delta_2\pi \xi \cdot 1\equiv 0 \pmod{p} \\
    \delta_3\pi \xi \cdot 1\equiv 0 \pmod{p} 
  \end{align}
  where $\delta_1 = b_1+b_0i$, $\delta_2 = b_2 + b_0j$ and
  $\delta_3 = b_3 + b_0k$.  Let us start by considering the case
  $p \equiv 3 \pmod{4}$.  If the permutation induced by $\xi'$ also
  fixes $\pi$, then one should also have
  \begin{equation*}
    \delta_1\pi\xi'\cdot 1\equiv 0 \pmod{p} \iff \delta_1\pi (\xi
    -2bi)\cdot 1 \equiv 0 \pmod{p}. 
  \end{equation*}
  Putting the two identities together, this is simply
  \begin{equation*}
    2b\delta_1\pi\cdot i \equiv 0 \pmod{p} \iff 2b(b_0^2+b_1^2)\equiv 0\pmod{p}.
  \end{equation*}
  Since $\gcd(b,c,d)=1$, we can assume that $p\nmid b$. As $p$ is the
  norm of $\pi$, we must have either $b_0^2+b_1^2=p$ or
  $b_0^2+b_1^2=0$, that is, $\pi=b_0+b_1i$ or $\pi=b_2j+b_3k$, but
  this means, in either case, that $p$ is the sum of two squares, and
  so by Fermat's two squares theorem, $p\equiv 1 \pmod{4}$, which is
  absurd.
    
  Now consider the case $p \equiv 1 \pmod{4}$. Using identity
  \eqref{fixed_point}, we have $\pi=b_0+b_1 i$ or
  $\pi=b_2j+b_3k$. Note that if we are in the second case, then we
  have $j\pi=-b_2+b_3i$, and can replace $\pi$ by an element of its
  associate class. Hence, essentially it suffices to deal with the
  case when $\pi=b_0+b_1i$.  Now, we will consider
  $\xi''=\rho-2cj$. Thus we must have
  \[ \delta_2\pi\xi'' \cdot 1\equiv 0 \pmod{p} \iff \delta_2\pi (\rho
    -2cj)\cdot 1 \equiv 0 \pmod{p}. \] This implies that
  \begin{equation*}
    2c\delta_2\pi \cdot j \equiv 0 \pmod{p} \iff 2c b_0^2 \equiv 0 \pmod{p},
  \end{equation*}
  and, if assume that $p\nmid c$, we have either $b_0^2=0$ (and so
  $b_1^2=p$) or $b_0^2=p$. But either case is absurd, thus $\xi''$
  does not fix $\pi$. Lastly if $p\mid c$ we consider
  $\xi'''=2-ci-bj+dk=\rho +(-b-c)i+(-b-c)j$ and simplyfing we have
  \[\delta_2\pi \xi''' \cdot 1 \equiv 0 \pmod{p} \iff -b\delta_2\pi
    (i+j) \cdot 1 \equiv 0. \] Hence, we have
  \[ b\delta_2\pi \cdot i + b\delta_2\pi \cdot j \equiv 0 \pmod{p}
    \iff 0 + b_0^2\equiv 0 \pmod{p}, \] which again is absurd.
  Therefore, in every scenario, we can find two different
  permutations, one that fixes only $\pi$ and another that fixes a
  different point, therefore we have our result.
\end{proof}
\bibliographystyle{plainnat}
\bibliography{MetaCycleStructure_08042025}
\end{document}